\newtheorem{thm}[equation]{Theorem}
\numberwithin{equation}{section}
\newtheorem{cor}[equation]{Corollary}
\newtheorem{lem}[equation]{Lemma}
\newtheorem{defin}[equation]{Definition}
\newtheorem{prop}[equation]{Proposition}
\begin{document}
\raggedbottom \voffset=-.7truein \hoffset=0truein \vsize=8truein
\hsize=6truein \textheight=8truein \textwidth=6truein
\baselineskip=18truept
\def\vareps{\varepsilon}
\def\mapright#1{\ \smash{\mathop{\longrightarrow}\limits^{#1}}\ }
\def\mapleft#1{\smash{\mathop{\longleftarrow}\limits^{#1}}}
\def\mapup#1{\Big\uparrow\rlap{$\vcenter {\hbox {$#1$}}$}}
\def\mapdown#1{\Big\downarrow\rlap{$\vcenter {\hbox {$\ssize{#1}$}}$}}
\def\on{\operatorname}
\def\spa{\on{span}}
\def\a{\alpha}
\def\bz{{\Bbb Z}}
\def\gd{\on{gd}}
\def\imm{\on{imm}}
\def\sq{\on{Sq}}
\def\ssp{\on{stablespan}}
\def\eps{\epsilon}
\def\br{{\Bbb R}}
\def\bc{{\Bbb C}}
\def\bh{{\Bbb H}}
\def\tfrac{\textstyle\frac}
\def\w{\wedge}
\def\b{\beta}
\def\A{{\cal A}}
\def\P{{\cal P}}
\def\zt{{\Bbb Z}_2}
\def\bq{{\Bbb Q}}
\def\ker{\on{ker}}
\def\coker{\on{coker}}
\def\u{{\cal U}}
\def\e{{\cal E}}
\def\exp{\on{exp}}
\def\wbar{{\overline w}}
\def\xbar{{\overline x}}
\def\ybar{{\overline y}}
\def\zbar{{\overline z}}
\def\ebar{{\overline e}}
\def\nbar{{\overline n}}
\def\mbar{{\overline m}}
\def\ubar{{\overline u}}
\def\et{{\widetilde E}}
\def\pt{{\widetilde P}}
\def\rt{{\widetilde R}}
\def\vt{{\widetilde\nu}}
\def\ni{\noindent}
\def\coef{\on{coef}}
\def\den{\on{den}}
\def\gd{{\on{gd}}}
\def\N{{\Bbb N}}
\def\Z{{\Bbb Z}}
\def\Q{{\Bbb Q}}
\def\R{{\Bbb R}}
\def\C{{\Bbb C}}
\def\Ah{\widehat{A}}
\def\Bh{\widehat{B}}
\def\Ch{\widehat{C}}
\def\Bin{\on{Bin}}
\def\xmin{x_{\text{min}}}
\def\xmax{x_{\text{max}}}
\title[Coefficients in powers of log series]
{Coefficients in powers of the log series}

\author{Donald M. Davis}
\address{Department of Mathematics, Lehigh University\\Bethlehem, PA 18015, USA}
\email{dmd1@lehigh.edu}
\date{January 16, 2010}

\keywords{Log series, multinomial coefficients}
\thanks {2000 {\it Mathematics Subject Classification}:
11A99.}

\maketitle
\begin{abstract} We determine the $p$-exponent in many of the coefficients of $\ell(x)^t$,
where $\ell(x)$ is the power series for $\log(1+x)/x$ and $t$ is any integer. In our proof, we introduce a variant of
multinomial coefficients. We also characterize the power series $x/\log(1+x)$ by certain zero coefficients in its powers.
 \end{abstract}

\section{Main divisibility theorem}\label{intro}
The divisibility by primes of the coefficients in the integer powers $\ell(x)^t$ of the power series for $\log(1+x)/x$,
given by
$$\ell(x):=\sum_{i=0}^\infty(-1)^i\frac{x^i}{i+1},$$
has been applied in several ways in algebraic topology.  See, for example, \cite{AT} and \cite{SS}.
Our main divisibility result, \ref{mainthm}, says that, in an appropriate range, this divisibility is the same as that
of the coefficients of $(1\pm \frac{x^{p-1}}p)^t$. Here $p$ is any prime and $t$ is any integer. We denote by $\nu_p(-)$ the exponent
of $p$ in an integer, and by $[x^n]f(x)$ the coefficient of $x^n$ in a power series $f(x)$.
\begin{thm} \label{mainthm} If $t$ is any integer and $m\le p^{\nu_p(t)}$, then $$\nu_p\left([x^{(p-1)m}]\ell(x)^t\right)=\nu_p(t)-\nu_p(m)-m.$$
\end{thm}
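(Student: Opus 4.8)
The plan is to extract the coefficient directly from the multinomial expansion of $\ell(x)^t$ and isolate a single dominant term. Writing $c_i=(-1)^i/(i+1)$, so that $\nu_p(c_i)=-\nu_p(i+1)$, the coefficient of $x^{(p-1)m}$ in $\ell(x)^t=\bigl(\sum_i c_ix^i\bigr)^t$ is
$$[x^{(p-1)m}]\ell(x)^t=\sum_{\mathbf k}\binom{t}{k_0,k_1,\dots}\prod_i c_i^{\,k_i},$$
the sum over all $\mathbf k=(k_0,k_1,\dots)$ with $\sum_i k_i=t$ and $\sum_i ik_i=(p-1)m$ (for $t<0$ the same identity holds with generalized multinomial coefficients, or one works instead with the reciprocal series $x/\log(1+x)$). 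The distinguished term is $\mathbf k^{*}$ with $k_{p-1}=m$, $k_0=t-m$ and all other $k_i=0$; it contributes $\binom{t}{m}(\pm1)/p^{m}$. Using the identity $m\binom{t}{m}=t\binom{t-1}{m-1}$ together with Kummer's theorem, I would check that the hypothesis $1\le m\le p^{\nu_p(t)}$ forces $p\nmid\binom{t-1}{m-1}$ (the last $\nu_p(t)$ base-$p$ digits of $t-1$ are all $p-1$, while those of $m-1$ occupy only the lowest $\nu_p(t)$ positions), whence $\nu_p\binom{t}{m}=\nu_p(t)-\nu_p(m)$ and the distinguished term has valuation exactly $\nu_p(t)-\nu_p(m)-m$, the asserted value.

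It then remains to prove that every other admissible $\mathbf k$ contributes a term of strictly larger $p$-adic valuation, so that the sum has valuation equal to that of the distinguished term. For this I first bound the denominator contribution $D(\mathbf k):=\sum_i k_i\nu_p(i+1)$. Writing $i+1=p^{a}b$ with $p\nmid b$ gives $i\ge p^{a}-1\ge a(p-1)$, hence $\nu_p(i+1)\le i/(p-1)$ for every $i\ge1$, with equality only at $i=p-1$. Summing against the constraint $\sum_i ik_i=(p-1)m$ yields $D(\mathbf k)\le m$, with equality precisely for $\mathbf k^{*}$. Thus, denominator-wise, the distinguished term is already the unique extremal one; the difficulty is that a non-extremal $\mathbf k$ with $D(\mathbf k)<m$ could a priori still tie or beat $\mathbf k^{*}$ unless its multinomial coefficient is sufficiently divisible by $p$.

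The heart of the argument is therefore a trade-off between $D(\mathbf k)$ and $\nu_p\binom{t}{\mathbf k}$. By Legendre's formula $\nu_p\binom{t}{\mathbf k}=\frac1{p-1}\bigl(\sum_i s_p(k_i)-s_p(t)\bigr)$, where $s_p$ is the base-$p$ digit sum, the valuation of a general term is
$$\nu_p\binom{t}{\mathbf k}-D(\mathbf k)=\frac1{p-1}\Bigl(\sum_i s_p(k_i)-s_p(t)\Bigr)-\sum_i k_i\nu_p(i+1).$$
I would reduce the theorem to showing that this quantity is uniquely minimized, over all admissible $\mathbf k$, at $\mathbf k=\mathbf k^{*}$ when $m\le p^{\nu_p(t)}$; this is exactly the combinatorial fact that the variant multinomial coefficients introduced in the paper are designed to capture. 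The main obstacle is the strictness of this minimum and its sharp dependence on the range: the inequality $m\le p^{\nu_p(t)}$ is what guarantees simultaneously that $p\nmid\binom{t-1}{m-1}$ and that no redistribution of the parts $k_i$ can lower the digit-sum penalty enough to offset a loss in $D(\mathbf k)$. I expect the crux to be a careful carry/digit analysis showing that moving weight away from the part $i=p-1$ always raises $\frac1{p-1}\sum_i s_p(k_i)-\sum_i k_i\nu_p(i+1)$ by strictly more than it can save, and that precisely this margin is lost once $m$ exceeds $p^{\nu_p(t)}$.
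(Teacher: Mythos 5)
Your setup matches the paper's: expand $\ell(x)^t$ by the multinomial theorem, isolate the term $\mathbf{k}^*$ concentrated at $i=p-1$, compute its valuation as $\nu_p(t)-\nu_p(m)-m$ (your Kummer argument for $p\nmid\binom{t-1}{m-1}$ is fine for $t>0$, though for negative $t$ you would still need something like the observation that $\nu_p(t-s)=\nu_p(s)$ for $0<s<p^{\nu_p(t)}$, which is how the paper handles all integers $t$ at once), and establish the denominator bound $\nu_p(j+1)\le j/(p-1)$ with equality only at $j=p-1$. All of that is correct. But the write-up stops exactly where the real work begins: you say it ``remains to prove'' that every other $\mathbf{k}$ has strictly larger valuation, you ``would reduce'' the theorem to a uniqueness-of-minimum claim for $\nu_p\binom{t}{\mathbf{k}}-D(\mathbf{k})$, and you ``expect the crux to be a careful carry/digit analysis.'' That central claim is never proved, so as written this is a plan rather than a proof. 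The gap is genuine because the bounds you do establish are insufficient on their own: a term with $D(\mathbf{k})<m$ could a priori have $\nu_p\binom{t}{\mathbf{k}}$ small enough to tie the distinguished term, and nothing in your argument excludes this.

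The paper closes this gap without any digit analysis of $\binom{t}{\mathbf{k}}$, by two short observations. First (Lemma \ref{lem}), for $\sum i_j\le p^{\nu_p(t)}$ one has $\nu_p\binom{t}{t-\sum i_j,i_1,\ldots,i_r}=\nu_p(t)+\nu_p\binom{\sum i_j}{i_1,\ldots,i_r}-\nu_p\left(\sum i_j\right)$, because $\nu_p(t-s)=\nu_p(s)$ for $0<s<p^{\nu_p(t)}$; this is where the hypothesis $m\le p^{\nu_p(t)}$ enters, once and for all. Second (Corollary \ref{corr2}), the integrality of the modified multinomial coefficients $c(i_1,\ldots,i_r)=(\sum i_jj)(\sum i_j-1)!/\prod i_j!$ yields $\nu_p\left(\sum i_j\right)\le\nu_p\left(\sum i_jj\right)+\nu_p\binom{\sum i_j}{i_1,\ldots,i_r}$. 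Combining the two, and using $\nu_p(\sum i_jj)=\nu_p(m)$ and $\sum i_jj=(p-1)m$, the excess valuation of a general term over the target value is at least $\sum i_j\left(\tfrac1{p-1}j-\nu_p(j+1)\right)$, which is strictly positive whenever some $i_j>0$ with $j\ne p-1$. If you want to finish along your own lines, you need a substitute for Corollary \ref{corr2}; the Legendre-formula expression is a reasonable starting point, but the inequality that makes the trade-off work is precisely what is missing.
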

Thus, for example, if $\nu_3(t)=2$, then, for $m=1,\ldots,9$, the exponent of 3 in $[x^{2m}]\ell(x)^t$ is, respectively,
$1$, $0$, $-2$, $-2$, $-3$, $-5$, $-5$, $-6$, and $-9$, which is the same as in $(1\pm \frac{x^2}3)^t$.
In Section \ref{sec2}, we will discuss what we can say about $\nu_p([x^n]\ell(x)^t)$ when $n$ is not divisible by $(p-1)$ and $n<(p-1)p^{\nu_p(t)}$ .

The motivation for Theorem \ref{mainthm} was provided by ongoing thesis work of Karen McCready at Lehigh University, which
seeks to apply the result when $p=2$ to make more explicit some nonimmersion results for complex projective
spaces described in \cite{SS}. Proving Theorem \ref{mainthm} led the author to discover an interesting modification
of multinomial coefficients.

\begin{defin} For an ordered $r$-tuple of nonnegative integers $(i_1,\ldots,i_r)$, we define
$$c(i_1,\ldots,i_r):=\frac{(\sum i_jj)(\sum i_j -1)!}{i_1!\cdots i_r!}.$$
\end{defin}
Note that $c(i_1,\ldots,i_r)$ equals $(\sum i_jj)/\sum i_j$ times a multinomial coefficient.
Surprisingly, these numbers satisfy the same recursive formula as  multinomial coefficients.
\begin{defin} For positive integers $k\le r$, let $E_k$ denote the
ordered $r$-tuple whose only nonzero entry is a 1 in position $k$.\end{defin}
\begin{prop} \label{recurs} If $I=(i_1,\ldots,i_r)$ is an ordered $r$-tuple of nonnegative integers, then
\begin{equation}\label{receq}c(I)=\sum_{i_k>0}c(I-E_k).\end{equation}
\end{prop}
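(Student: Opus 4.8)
The plan is to reduce the identity to a single elementary weighted-sum computation. First I would fix notation by setting $n=\sum_j i_j$ and $s=\sum_j j\,i_j$, so that the definition rewrites as
$$c(I)=\frac{s\,(n-1)!}{i_1!\cdots i_r!}.$$
I would restrict to the case $n\ge 2$, which is the range in which all the factorials appearing below are literally defined; the degenerate tuples with $n\le 1$ are not needed for the applications and can be set aside (for instance, the sum on the right of \eqref{receq} is empty when $I$ is the zero tuple).

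Next, for each index $k$ with $i_k>0$ I would record precisely how the three ingredients of $c$ change when we pass from $I$ to $I-E_k$: the total $n$ drops to $n-1$, the weighted total $s$ drops to $s-k$, and the denominator $i_1!\cdots i_r!$ is divided by $i_k$, since $i_k!=i_k\,(i_k-1)!$. This gives
$$c(I-E_k)=\frac{(s-k)\,(n-2)!}{i_1!\cdots(i_k-1)!\cdots i_r!}=\frac{(s-k)\,i_k\,(n-2)!}{i_1!\cdots i_r!}.$$

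Then I would sum over all $k$ with $i_k>0$ and factor out the common $(n-2)!\big/(i_1!\cdots i_r!)$, leaving the sum $\sum_{i_k>0}(s-k)i_k$. The crux of the argument—and essentially its only content—is the observation that the terms with $i_k=0$ vanish, so the sum may be taken over all $k$, and then
$$\sum_{k=1}^r (s-k)\,i_k=s\sum_k i_k-\sum_k k\,i_k=sn-s=s(n-1).$$
Substituting this back yields $\dfrac{s\,(n-1)\,(n-2)!}{i_1!\cdots i_r!}=\dfrac{s\,(n-1)!}{i_1!\cdots i_r!}=c(I)$, as required. I do not anticipate a genuine obstacle here: the proof is a direct manipulation, and the one thing to get right is the bookkeeping identity $\sum_k(s-k)i_k=s(n-1)$, which is exactly the step that converts the factor $(n-2)!$ back into the $(n-1)!$ demanded by $c(I)$, and which explains why a weighting of multinomial coefficients by $s/n$ preserves the Pascal-type recursion.
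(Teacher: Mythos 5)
Your proof is correct and follows essentially the same route as the paper's: compute each $c(I-E_k)$ as $(s-k)\,i_k\,(n-2)!/\prod i_j!$ and use $\sum_k(s-k)i_k=s(n-1)$ to recover $c(I)$. Your explicit restriction to $n\ge 2$ is a reasonable tidying of a point the paper leaves implicit (its recursion is only ever applied down to the tuples $E_k$).
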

If we think of a multinomial coefficient $\binom{\sum i_j}{i_1,\cdots,i_r}:=(i_1+\cdots+i_r)!/((i_1)!\cdots(i_r)!)$ as being determined
by the unordered $r$-tuple $(i_1,\ldots,i_r)$ of nonnegative integers, then it satisfies the recursive
formula analogous to that of  (\ref{receq}). For a multinomial coefficient, entries which are 0
can be omitted, but that is not the case for $c(i_1,\ldots,i_r)$.

\begin{proof}[Proof of Proposition \ref{recurs}] The right hand side of (\ref{receq}) equals
\begin{eqnarray*}&&\sum_k i_k\frac{(\sum i_j -2)!}{(i_1)!\cdots(i_r)!}\left(\sum_j i_jj -k\right)\\
&=&\frac{(\sum i_j -2)!}{(i_1)!\cdots(i_r)!}\left(\left(\sum i_k\right)\left(\sum i_jj\right)-\sum i_kk\right)\\
&=&\frac{(\sum i_j -2)!}{(i_1)!\cdots(i_r)!}\left(\sum i_jj\right)\left(\sum i_j-1\right),
\end{eqnarray*}
which equals the left hand side of (\ref{receq}).
\end{proof}
\begin{cor} If $\sum i_j>0$, then $c(i_1,\ldots,i_r)$ is a positive integer.\label{cor1}\end{cor}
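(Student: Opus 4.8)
The plan is to induct on the total $n=\sum_j i_j$, using the recursion of Proposition \ref{recurs} as the inductive engine. First I would dispose of positivity, which is immediate and not the real content: since $\sum i_j>0$ forces some $i_k>0$ with $k\ge 1$, both the numerator $(\sum i_jj)(\sum i_j-1)!$ and the denominator $i_1!\cdots i_r!$ of the defining fraction are positive integers, so $c(I)>0$ as a rational number. The substance of the corollary is therefore integrality.

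For the base case $n=1$, exactly one entry equals $1$, say $i_k=1$ with all others zero; then $\sum i_jj=k$, $(\sum i_j-1)!=0!=1$, and the denominator is $1$, so $c(I)=k$, a positive integer.

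For the inductive step, suppose $n=\sum i_j\ge 2$ and assume $c(J)$ is a positive integer for every tuple $J$ with $0<\sum_j J_j<n$. For each $k$ with $i_k>0$, the tuple $I-E_k$ has nonnegative entries and total $n-1\ge 1$, so the inductive hypothesis applies and each $c(I-E_k)$ is a positive integer. The recursion (\ref{receq}) then expresses $c(I)$ as a nonempty sum of these positive integers, whence $c(I)$ is itself a positive integer, completing the induction.

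I expect no genuine obstacle here: the recursion of Proposition \ref{recurs} is precisely tailored to reduce integrality of $c(I)$ to that of strictly smaller tuples, so the proof is essentially automatic once the recursion is in hand. The only points needing care are trivial bookkeeping: handling the base case $n=1$ separately (so that the subtraction $I-E_k$ never produces a tuple of total $0$, on which $c$ is not asserted to be a positive integer), and noting that the sum on the right of (\ref{receq}) is nonempty, both of which follow at once from the hypothesis $\sum i_j>0$.
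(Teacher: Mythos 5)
Your proof is correct and is essentially the paper's own argument: the paper's one-line proof ("use (\ref{receq}) recursively to express $c(I)$ as a sum of various $c(E_k)=k$") is exactly the induction on $\sum i_j$ that you have written out in full. Your version just makes the base case and the nonemptiness of the sum explicit, which is careful but not a different route.
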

\begin{proof} Use (\ref{receq}) recursively to express $c(i_1,\ldots,i_r)$ as a sum of various $c(E_k)=k$.
\end{proof}
\begin{cor}\label{corr2} For any ordered $r$-tuple $(i_1,\ldots,i_r)$ of nonnegative integers and any prime $p$,
\begin{equation}\nu_p\left(\sum i_j\right)\le\nu_p\left(\sum i_jj\right)+\nu_p\binom{\sum i_j}{i_1,\cdots,i_r}.\label{cor2}\end{equation}
\end{cor}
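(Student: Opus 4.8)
The plan is to deduce the stated inequality directly from Corollary \ref{cor1}, with essentially no new computation required. The key observation is the factorization recorded just after the definition of $c$: writing $S=\sum i_j$, $T=\sum i_jj$, and $M=\binom{\sum i_j}{i_1,\cdots,i_r}$ for the multinomial coefficient, one has
$$c(i_1,\ldots,i_r)=\frac{(\sum i_jj)(\sum i_j-1)!}{i_1!\cdots i_r!}=\frac{T}{S}\,M.$$
So the entire content of $(\ref{cor2})$ is the assertion that the factor $T/S$ cannot decrease the $p$-adic valuation of $M$ below $0$ — equivalently, that $S$ divides the product $TM$.

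First I would dispose of the degenerate case $S=0$, in which every $i_j=0$; then $T=0$ and $M=1$, so $(\ref{cor2})$ reads $\infty\le\infty$ and holds trivially (or one simply excludes this case). For $S>0$, Corollary \ref{cor1} asserts that $c(i_1,\ldots,i_r)$ is a positive integer. Hence $TM/S=c(i_1,\ldots,i_r)\in\Z$, i.e.\ $S\mid TM$. Applying $\nu_p(-)$ to this divisibility relation and using $\nu_p(TM)=\nu_p(T)+\nu_p(M)$ then yields
$$\nu_p(S)\le\nu_p(T)+\nu_p(M),$$
which is exactly $(\ref{cor2})$.

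There is no genuine obstacle at this stage: all the difficulty has already been absorbed into the integrality of $c$, which itself rests on the recursion (\ref{receq}) of Proposition \ref{recurs} (via the reduction of $c(I)$ to a sum of terms $c(E_k)=k$). The only points that warrant any care are the bookkeeping in the $S=0$ case and the conceptual remark that the multinomial coefficient $M$ is unaffected by deleting zero entries from the tuple, whereas $c$ is not — a distinction that is immaterial for this valuation estimate but, as the preceding discussion notes, matters elsewhere.
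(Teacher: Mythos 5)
Your proof is correct and is essentially identical to the paper's: both rewrite $c(i_1,\ldots,i_r)$ as $\frac{\sum i_jj}{\sum i_j}\binom{\sum i_j}{i_1,\cdots,i_r}$ and invoke the integrality of $c$ from Corollary \ref{cor1} to conclude. The only difference is that you spell out the divisibility step and the degenerate case $\sum i_j=0$, which the paper leaves implicit.
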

\begin{proof} Multiply numerator and denominator of the definition of $c(i_1,\ldots,i_r)$ by $\sum i_j$
and apply Corollary \ref{cor1}.
\end{proof}

The proof of Theorem \ref{mainthm}  utilizes Corollary \ref{corr2} and also the following lemma.
\begin{lem} \label{lem} If $t$ is any integer and $\sum i_j\le p^{\nu_p(t)}$, then
\begin{equation}\label{nueq}\nu_p\binom{t}{t-\sum i_j,i_1,\ldots,i_r}=\nu_p(t)+\nu_p\binom{\sum i_j}{i_1,\ldots,i_r}-\nu_p\left(\sum i_j\right).\end{equation}
\end{lem}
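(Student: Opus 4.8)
The plan is to peel off a single generalized binomial factor and reduce the whole statement to one fact about $\nu_p\binom{t}{s}$, where I write $s=\sum i_j$. First I would record the factorization
$$\binom{t}{t-s,i_1,\ldots,i_r}=\binom{t}{s}\binom{s}{i_1,\ldots,i_r},\qquad \binom{t}{s}:=\frac{t(t-1)\cdots(t-s+1)}{s!},$$
in which $\binom{t}{s}$ is the generalized binomial coefficient; this is the only sensible reading of the left-hand side when $t<0$, and it is exactly the factor that appears when one expands $\ell(x)^t$ by first writing $(1+u)^t=\sum_s\binom{t}{s}u^s$ and then expanding $u^s$ multinomially. Taking $\nu_p$ of both sides and cancelling the common term $\nu_p\binom{s}{i_1,\ldots,i_r}$, the identity (\ref{nueq}) becomes equivalent to the single claim
$$\nu_p\binom{t}{s}=\nu_p(t)-\nu_p(s)\qquad\text{whenever } s\le p^{\nu_p(t)},$$
in which the individual entries $i_j$ have disappeared entirely.

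To prove this reduced claim I would work directly from the product formula for $\binom{t}{s}$ and evaluate $\nu_p$ factor by factor; this has the advantage of treating negative $t$ on exactly the same footing as positive $t$. Writing $a=\nu_p(t)$, the hypothesis $s\le p^a$ forces every index $j$ with $1\le j\le s-1$ to satisfy $j<p^a$, hence $\nu_p(j)<a=\nu_p(t)$, and therefore $\nu_p(t-j)=\min(\nu_p(t),\nu_p(j))=\nu_p(j)$. The $j=0$ factor contributes $\nu_p(t)=a$. Summing the contributions gives
$$\nu_p\Bigl(\prod_{j=0}^{s-1}(t-j)\Bigr)=a+\sum_{j=1}^{s-1}\nu_p(j)=a+\nu_p\bigl((s-1)!\bigr),$$
and subtracting $\nu_p(s!)=\nu_p\bigl((s-1)!\bigr)+\nu_p(s)$ yields $\nu_p\binom{t}{s}=a-\nu_p(s)=\nu_p(t)-\nu_p(s)$, which is precisely what is required.

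The one place where the hypothesis $s\le p^{\nu_p(t)}$ is actually used — and the step I would watch most carefully — is the valuation equality $\nu_p(t-j)=\nu_p(j)$, which rests on the \emph{strict} inequality $\nu_p(j)<\nu_p(t)$ for $1\le j\le s-1$. This strictness is exactly what breaks down once $s$ is allowed to exceed $p^{\nu_p(t)}$, for then the index $j=p^{\nu_p(t)}$ can occur, giving $\nu_p(t-j)\ge\nu_p(t)$ and corrupting the factor-by-factor count. Everything else is routine bookkeeping: the factorization identity, additivity of $\nu_p$ over products, and the elementary splitting $\nu_p(s!)=\nu_p\bigl((s-1)!\bigr)+\nu_p(s)$. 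I therefore expect no serious obstacle beyond pinning down the range condition tightly enough that the valuation of each factor $t-j$ is pinned down by $\nu_p(j)$ alone.
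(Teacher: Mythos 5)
Your proof is correct and is essentially the argument in the paper: both rest on writing the generalized multinomial as a falling factorial over $\prod i_j!$ and using $\nu_p(t-j)=\nu_p(j)$ for $0<j<p^{\nu_p(t)}$, your only cosmetic difference being that you first split off the factor $\binom{s}{i_1,\ldots,i_r}$ and cancel it rather than carrying the $\prod i_j!$ through the computation.
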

\begin{proof} For any integer $t$, the multinomial coefficient on the left hand side of (\ref{nueq}) equals
$t(t-1)\cdots(t+1-\sum i_j)/\prod i_j!$, and so
the left hand side of (\ref{nueq}) equals $\nu_p(t(t-1)\cdots(t+1-\sum i_j))-\sum\nu_p(i_j!)$.
Since $\nu_p(t-s)=\nu_p(s)$ provided $0<s<p^{\nu_p(t)}$, this becomes $\nu_p(t)+\nu_p((\sum i_j -1)!)-\sum\nu_p(i_j!)$,
and this equals the right hand side of (\ref{nueq}).\end{proof}

\begin{proof}[Proof of Theorem \ref{mainthm}] By the multinomial theorem,
$$[x^{(p-1)m}]\ell(x)^t=(-1)^{(p-1)m}\sum_IT_I,$$ where \begin{equation}T_I=\binom{t}{t-\sum i_j,i_1,\ldots,i_r}\frac1{\prod(j+1)^{i_j}},\label{Teq}\end{equation}
with the sum taken over all $I=(i_1,\ldots,i_r)$ satisfying $\sum i_jj=(p-1)m$.
Using Lemma \ref{lem}, we have
$$\nu_p(T_I)=\nu_p(t)+\nu_p\binom{\sum i_j}{i_1,\ldots,i_r}-\nu_p\left(\sum i_j\right)-\sum i_j\nu_p(j+1).$$
If $I=mE_{p-1}$, then $\nu_p(T_I)=\nu_p(t)+0-\nu_p(m)-m$. The theorem will follow once we show that
all other $I$ with $\sum i_jj=(p-1)m$ satisfy $\nu_p(T_I)>\nu_p(t)-\nu_p(m)-m$. Such $I$ must have
$i_j>0$ for some $j\ne p-1$.  This is relevant because $\frac1{p-1}j\ge \nu_p(j+1)$ with equality if and only if
$j=p-1$. For  $I$ such as we are considering, we have
\begin{eqnarray}&&\nu_p(T_I)-(\nu_p(t)-\nu_p(m)-m)\label{eqs}\\
&=&\nu_p\binom{\sum i_j}{i_1,\ldots,i_r}-\nu_p(\sum i_j)-\sum i_j\nu_p(j+1)+\nu_p(\sum i_jj)+\tfrac1{p-1}\sum i_jj\nonumber\\
&\ge&\sum i_j(\tfrac1{p-1}j-\nu_p(j+1))\nonumber\\
&>&0.\nonumber\end{eqnarray}
We have used (\ref{cor2}) in the middle step.
\end{proof}

\section{Zero coefficients}\label{zsec}
While studying coefficients related to Theorem \ref{mainthm}, we noticed the following result about occurrences
of coefficients of powers of the reciprocal log series which equal 0.
\begin{thm}\label{cor1p} If $m$ is odd and $m>1$, then $[x^m]\bigl(\frac x{\log(1+x)}\bigr)^m=0$, while if $m$ is even and $m>0$, then $[x^{m+1}]\bigl(\frac x{\log(1+x)}\bigr)^m=0$.\end{thm}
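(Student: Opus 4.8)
The plan is to use the identity $\tfrac{x}{\log(1+x)}=\ell(x)^{-1}$, so that the asserted coefficients are the cases $t=-m,\ n=m$ (odd) and $t=-m,\ n=m+1$ (even) of $[x^n]\ell(x)^t$, and to evaluate them by a residue computation. First I would write
$$[x^n]\Bigl(\tfrac{x}{\log(1+x)}\Bigr)^m=\operatorname*{Res}_{x=0}\frac{x^{m-n-1}}{(\log(1+x))^m}\,dx,$$
and then substitute $u=\log(1+x)$, i.e.\ $x=e^u-1$, $dx=e^u\,du$. Since $x\mapsto\log(1+x)$ is a biholomorphism near $0$ fixing the origin, the residue of the $1$-form is unchanged, and the integrand becomes $\dfrac{(e^u-1)^{m-n-1}e^u}{u^m}\,du$.

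Next I would specialize. In the odd case $n=m$ one has $m-n-1=-1$, so the coefficient equals $\displaystyle[u^{m-1}]\frac{e^u}{e^u-1}$; in the even case $n=m+1$ one has $m-n-1=-2$, so it equals $\displaystyle[u^{m-1}]\frac{e^u}{(e^u-1)^2}$. Both are controlled by the Bernoulli numbers $B_k$, defined by $\dfrac{u}{e^u-1}=\sum_{k\ge0}B_k\dfrac{u^k}{k!}$. Indeed,
$$\frac{e^u}{e^u-1}=\frac1u\sum_{k\ge0}(-1)^kB_k\frac{u^k}{k!},\qquad \frac{e^u}{(e^u-1)^2}=-\frac{d}{du}\,\frac1{e^u-1}=-\sum_{k\ge0}B_k\frac{(k-1)u^{k-2}}{k!},$$
so the two coefficients work out to $(-1)^mB_m/m!$ and $-mB_{m+1}/(m+1)!$ respectively.

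Finally I would invoke the classical vanishing $B_k=0$ for odd $k\ge3$. In the odd case $m\ge3$ is odd, whence $B_m=0$; in the even case $m\ge2$ is even, whence $m+1$ is odd and $\ge3$, so $B_{m+1}=0$. In both situations the coefficient vanishes, which is the assertion. The hypotheses $m>1$ and $m>0$ are exactly what keep the offending value $B_1=-\tfrac12$ out of the computation.

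The step requiring the most care is the residue calculus: one must justify that the residue of the meromorphic $1$-form survives the change of coordinate $u=\log(1+x)$, and then track the orders of the poles of $\dfrac{e^u}{e^u-1}$ and $\dfrac{e^u}{(e^u-1)^2}$ at $u=0$ so that the correct Laurent coefficient is read off. A direct expansion of $\ell(x)^{-m}$ through the terms $T_I$ of \eqref{Teq} (with $t=-m$) would in principle also work, but the substitution $u=\log(1+x)$ is what turns the problem into a transparent statement about the vanishing of odd Bernoulli numbers. As a bonus, this approach produces the exact values $(-1)^mB_m/m!$ and $-mB_{m+1}/(m+1)!$, not merely the fact that they are zero.
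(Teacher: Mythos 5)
Your proof is correct and is essentially the paper's argument: both substitute $u=\log(1+x)$ (the paper's $y$), reduce the coefficient in question to $[u^{m-1}]\frac{e^u}{e^u-1}$ or $[u^{m-1}]\frac{e^u}{(e^u-1)^2}$, identify these with $(-1)^mB_m/m!$ and $-mB_{m+1}/(m+1)!$, and conclude from the vanishing of the odd-index Bernoulli numbers. The only difference is packaging: you invoke coordinate-invariance of the residue of a $1$-form to pass to the $u$-variable in one step, whereas the paper carries the full series $\sum_i a_i(e^y-1)^{i-j-1}e^y$ along and kills the $i\ne j$ terms by observing that they are derivatives of Laurent series --- which is the same residue fact, unpacked.
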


Moreover, this property  characterizes the reciprocal log series.
\begin{cor}\label{cor2p} A power series $f(x)=1+\sum\limits_{i\ge1}c_ix^i$ with $c_1\ne0$ has  $[x^m](f(x)^m)=0$
for all odd $m>1$, and $[x^{m+1}](f(x)^m)=0$ for all even $m>0$ if and only if $f(x)=\frac{2c_1x}{\log(1+2c_1x)}$.\end{cor}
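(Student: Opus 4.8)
The plan is to prove the two implications separately. The forward (``if'') implication will follow almost instantly from Theorem \ref{cor1p} by a rescaling, while the converse reduces to a uniqueness statement.

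First I would dispose of the ``if'' direction. Writing $g(x)=x/\log(1+x)$, the proposed series is exactly $f(x)=g(2c_1x)$; since $x/\log(1+x)=1+\frac{x}{2}+\cdots$, its linear coefficient is indeed $c_1$, consistent with the hypothesis. For all $m,n$ one has $[x^n](f(x)^m)=(2c_1)^n[x^n](g(x)^m)$, so every coefficient required to vanish is a multiple of the corresponding coefficient of a power of $g$, and these vanish by Theorem \ref{cor1p}.

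For the converse, I would argue that the vanishing conditions, together with $c_0=1$ and the prescribed value of $c_1\ne0$, determine all remaining coefficients uniquely; since $g(2c_1x)$ already satisfies the conditions and shares the same $c_0$ and $c_1$, uniqueness then forces $f(x)=g(2c_1x)$. To see the determination, group the hypotheses into levels $k\ge1$: level $k$ consists of the even condition $[x^{2k+1}](f^{2k})=0$ (from $m=2k$) and the odd condition $[x^{2k+1}](f^{2k+1})=0$ (from $m=2k+1$). Using $[x^n](f^m)=\sum_{i_1+\cdots+i_m=n}c_{i_1}\cdots c_{i_m}$, one checks that in $[x^{2k+1}](f^m)$ the only coefficients of index exceeding $2k-1$ that occur are $c_{2k+1}$, with coefficient $m$, and $c_{2k}$, with coefficient $m(m-1)c_1$ (one factor supplies the high-index term and exactly one of the remaining supplies a $c_1$, the rest supplying $c_0=1$). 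Treating $c_1,\dots,c_{2k-1}$ as known by induction, the two level-$k$ equations thus form a linear system in the unknowns $(c_{2k},c_{2k+1})$ with matrix
$$\begin{pmatrix}2k(2k-1)c_1 & 2k\\ (2k+1)(2k)c_1 & 2k+1\end{pmatrix},$$
whose determinant equals $2k(2k+1)c_1\bigl((2k-1)-2k\bigr)=-2k(2k+1)c_1$.

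The hard part is precisely this determinant computation. Because $c_1\ne0$ and $k\ge1$, the determinant is nonzero, so each level determines $c_{2k}$ and $c_{2k+1}$ uniquely, and induction on $k$ pins down the entire series. The real content is that the even and odd families of conditions are exactly independent enough---neither family alone would suffice---to solve for two consecutive coefficients at a time; the clean factorization of the determinant and the indispensability of $c_1\ne0$ are the crux, with the remaining verifications being routine bookkeeping.
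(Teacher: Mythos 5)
Your proof is correct and follows essentially the same route as the paper: the forward direction via Theorem \ref{cor1p} (with the rescaling $f(x)=g(2c_1x)$ made explicit), and the converse by observing that $[x^{2k+1}](f^m)=m(m-1)c_1c_{2k}+mc_{2k+1}+P$ with $P$ depending only on lower coefficients, so the two conditions at each level determine $c_{2k}$ and $c_{2k+1}$ inductively. You in fact supply a detail the paper leaves implicit, namely the nonvanishing of the $2\times2$ determinant $-2k(2k+1)c_1$.
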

\begin{proof} By Theorem \ref{cor1p}, the reciprocal log series satisfies the stated property. Now assume that $f$ satisfies this property and let $n$ be a positive integer and $\eps=0$ or 1. Since $$[x^{2n+1}]f(x)^{2n+\eps}=(2n+\eps)(2n+\eps-1)c_1c_{2n}+(2n+\eps)c_{2n+1}+P,$$
where $P$ is a polynomial in $c_1,\ldots,c_{2n-1}$, we see that
$c_{2n}$ and $c_{2n+1}$ can be determined from the $c_i$ with $i<2n$.
\end{proof}

Our proof of Theorem \ref{cor1p} is an extension of arguments of \cite{AT} and \cite{Dlog}.
It benefited from ideas of Francis Clarke. It can be derived from results in \cite[ch.6]{GKP}, but
we have not seen it explicitly stated anywhere.

\begin{proof}[Proof of Theorem \ref{cor1p}]
 Let $m>1$ and
$$\left(\frac x{\log(1+x)}\right)^m=\sum_{i\ge0}a_i x^i.$$
Letting $x=e^y-1$, we obtain
\begin{equation}\label{eq}\left(\frac{e^y-1}y\right)^m=\sum_{i\ge0}a_i(e^y-1)^i.\end{equation}
Let $j$ be a positive integer, and multiply both sides of (\ref{eq}) by $y^me^y/(e^y-1)^{j+1}$, obtaining
\begin{eqnarray}\label{eq1}(e^y-1)^{m-j-1}e^y&=&y^m\sum_{i\ge0}a_i(e^y-1)^{i-j-1}e^y\\
&=&y^m\left(a_j\frac{e^y}{e^y-1}+\sum_{i\ne j}\tfrac{a_i}{i-j}\tfrac d{dy}(e^y-1)^{i-j}\right).\nonumber\end{eqnarray}
  Since the derivative of a Laurent series has no $y^{-1}$-term, we conclude
that the coefficient of $y^{m-1}$ on the RHS of (\ref{eq1}) is $a_j[y^{-1}](1+\frac 1y\frac y{e^y-1})=a_j$.

The Bernoulli numbers $B_n$ are defined by $\frac y{e^y-1}=\sum \frac{B_n}{n!}y^n$.
Since $\frac y{e^y-1}+\frac12y$ is an even function of $y$, we have the well-known result that
$B_n=0$ if $n$ is odd and $n>1$.

Let
$$j=\begin{cases}m&m\text{ odd}\\
m+1&m\text{ even.}\end{cases}$$
For this $j$, the LHS of (\ref{eq1}) equals
$$\begin{cases}1+\sum\frac{B_i}{i!}y^{i-1}&m\text{ odd}\\
-\frac d{dy}(e^y-1)^{-1}=-\sum\frac{(i-1)B_i}{i!}y^{i-2}&m\text{ even,}\end{cases}$$
and comparison of coefficient of $y^{m-1}$ in (\ref{eq1}) implies
$$\begin{cases}a_m=\frac{B_m}{m!}=0&m\text{ odd}\\
a_{m+1}=-\frac{mB_{m+1}}{(m+1)!}=0&m\text{ even,}\end{cases}$$
yielding the theorem.\end{proof}

\section{Other coefficients}\label{sec2}
In this section, a sequel to Theorem \ref{mainthm}, we describe what can be easily said about
$\nu_p([x^{(p-1)m+\Delta}]\ell(x)^t)$ when $0<\Delta<p-1$ and $m< p^{\nu_p(t)}$.
This is not relevant in the motivating case, $p=2$. Our first result says that these exponents are at least as large
as those of
$[x^{(p-1)m}]\ell(x)^t$.
Here $t$ continues to denote any integer, positive or negative.
\begin{prop} If $0<\Delta<p-1$ and $m< p^{\nu_p(t)}$, then
$$\nu_p\left([x^{(p-1)m+\Delta}]\ell(x)^t\right)\ge\nu_p(t)-\nu_p(m)-m.$$\label{delprop}
\end{prop}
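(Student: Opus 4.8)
The plan is to follow the proof of Theorem \ref{mainthm} verbatim for as long as possible. By the multinomial theorem, $[x^{(p-1)m+\Delta}]\ell(x)^t=(-1)^{(p-1)m+\Delta}\sum_I T_I$, with $T_I$ as in (\ref{Teq}), the sum now taken over all $I=(i_1,\ldots,i_r)$ with $\sum i_jj=(p-1)m+\Delta$. Since the $p$-adic valuation of a sum is at least the minimum of the valuations of its terms, it suffices to show $\nu_p(T_I)\ge\nu_p(t)-\nu_p(m)-m$ for each such $I$. Writing $N=\sum i_jj=(p-1)m+\Delta$ and invoking Lemma \ref{lem} for the terms with $\sum i_j\le p^{\nu_p(t)}$ (the terms with larger $\sum i_j$ being handled exactly as in the proof of Theorem \ref{mainthm}), this reduces to
\begin{equation*}\nu_p\binom{\sum i_j}{i_1,\ldots,i_r}-\nu_p\Bigl(\sum i_j\Bigr)-\sum i_j\nu_p(j+1)+\nu_p(m)+m\ge0.\end{equation*}

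First I would record the two elementary ingredients from the proof of Theorem \ref{mainthm}. The inequality $\tfrac1{p-1}j\ge\nu_p(j+1)$, with equality iff $j=p-1$, gives $d_j:=\tfrac1{p-1}j-\nu_p(j+1)\ge0$; summing, $\sum i_j\nu_p(j+1)\le\tfrac1{p-1}\sum i_jj=m+\tfrac{\Delta}{p-1}$, and since the left side is an integer while $0<\tfrac{\Delta}{p-1}<1$, in fact $\sum i_j\nu_p(j+1)\le m$. Next, as in (\ref{eqs}), Corollary \ref{corr2} (equivalently, positivity of $c(I)$ from Corollary \ref{cor1}) gives $\nu_p\binom{\sum i_j}{i_1,\ldots,i_r}-\nu_p(\sum i_j)\ge-\nu_p(N)$. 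Combining these, the displayed quantity is $\ge\nu_p(m)-\nu_p(N)$. Because $N=(p-1)m+\Delta$ with $p\nmid\Delta$, we have $\nu_p(N)=0$ whenever $\nu_p(m)\ge1$; hence the bound is immediate except when $\nu_p(m)=0$ and $p\mid N$.

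The main obstacle is precisely this remaining case, $\nu_p(m)=0$ and $\nu_p(N)\ge1$. Here the two estimates above are individually too weak: replacing $\nu_p\binom{\sum i_j}{i_1,\ldots,i_r}-\nu_p(\sum i_j)$ by $-\nu_p(N)$ discards the divisibility of $c(I)$, while replacing $\sum i_j\nu_p(j+1)$ by $m$ discards the deficiency $\sum i_jd_j$, and in the tight instances these two savings trade off against each other rather than occurring at once. What must be established is the combined inequality
\begin{equation*}\nu_p\Bigl(\sum i_j\Bigr)-\nu_p\binom{\sum i_j}{i_1,\ldots,i_r}\le m-\sum i_j\nu_p(j+1),\end{equation*}
whose right-hand side equals $\sum i_jd_j-\tfrac{\Delta}{p-1}$, a nonnegative integer.

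To prove this I would pass to base-$p$ digits. With $S_p$ the digit sum and $\nu_p(n!)=(n-S_p(n))/(p-1)$, the left side becomes $\bigl(1+S_p(\sum i_j-1)-\sum_jS_p(i_j)\bigr)/(p-1)$, and $\nu_p(j+1)=\bigl(1+S_p(j)-S_p(j+1)\bigr)/(p-1)$, so the claim collapses to a single inequality among digit sums amenable to Kummer's carry count. An alternative is induction on $\sum i_j$ through the recursion (\ref{receq}), using $\nu_p(c(I))\ge\min_k\nu_p(c(I-E_k))$; the difficulty there is that deleting one part changes $N$, $\Delta$, and the deficiency simultaneously, so the inductive statement must be framed to absorb all three at once. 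I expect the digit-sum route to be the cleaner of the two.
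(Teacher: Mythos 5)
Your setup coincides with the paper's up to the point where the proof becomes nontrivial: the reduction to showing $\nu_p\binom{\sum i_j}{i_1,\ldots,i_r}-\nu_p\bigl(\sum i_j\bigr)-\sum i_j\nu_p(j+1)+\nu_p(m)+m\ge0$ for each $I$ with $\sum i_jj=(p-1)m+\Delta$ is the paper's (\ref{A}); the observation that $\sum i_j\nu_p(j+1)\le m$ because it is an integer less than $m+\tfrac{\Delta}{p-1}$ appears there verbatim; and your use of Corollary \ref{corr2} to settle the case $\nu_p\bigl(\sum i_jj\bigr)=0$ is essentially the paper's (\ref{B}). The genuine gap is that the one remaining case --- which is the actual content of the proposition --- is not proved. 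You correctly isolate the inequality $\nu_p\bigl(\sum i_j\bigr)-\nu_p\binom{\sum i_j}{i_1,\ldots,i_r}\le m-\sum i_j\nu_p(j+1)$, but then only name two possible strategies without executing either. The digit-sum translation of the left side is correct as far as it goes, but the resulting statement does not ``collapse to a single inequality amenable to Kummer's carry count'': its right side involves the weighted sum $\sum i_jj$ and the valuations $\nu_p(j+1)$, about which carry-counting for $\binom{\sum i_j}{i_1,\ldots,i_r}$ says nothing, so no proof is in sight down that road; the inductive route is likewise only gestured at, and you yourself record its obstruction. A step that has not been taken cannot be checked.

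The idea you are missing is a refinement of Corollary \ref{corr2}: since $\frac1{\sum i_j}\binom{\sum i_j}{i_1,\ldots,i_r}=\frac1{i_j}\binom{\sum i_j-1}{i_1,\ldots,i_j-1,\ldots,i_r}$ for each $j$ with $i_j>0$, the defect $e:=\nu_p\bigl(\sum i_j\bigr)-\nu_p\binom{\sum i_j}{i_1,\ldots,i_r}$ satisfies $e\le\min_j\nu_p(i_j)$. If $e>0$, every $i_j$ is therefore divisible by $p^e$, so the integer $\sum i_j\bigl(j-(p-1)\nu_p(j+1)\bigr)=(p-1)\bigl(m-\sum i_j\nu_p(j+1)\bigr)+\Delta$ is a positive multiple of $p^e$ (positive because $0<\Delta<p-1$ forces some $i_j>0$ with $j\ne p-1$), whence $m-\sum i_j\nu_p(j+1)\ge\frac{p^e-\Delta}{p-1}>e-1$; being an integer, it is $\ge e$, which is exactly your inequality. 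This is the content of the paper's (\ref{vt}) and the paragraph following (\ref{C}). Separately, your parenthetical deferral of the terms with $\sum i_j>p^{\nu_p(t)}$ to ``the proof of Theorem \ref{mainthm}'' points at nothing: that proof applies Lemma \ref{lem} uniformly with no such case division, so if you believe those terms require separate treatment you must supply it yourself.
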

\begin{proof} We consider terms $T_I$ as in (\ref{Teq}) with $\sum i_jj=(p-1)m+\Delta$.
Similarly to (\ref{eqs}), we obtain
\begin{eqnarray} &&\nu_p(T_I)-(\nu_p(t)-\nu_p(m)-m)\nonumber\\
&=&\nu_p\binom{\sum i_j}{i_1,\ldots,i_r}-\nu_p\left(\sum i_j\right)-\sum i_j\nu_p(j+1)\nonumber\\
&&+\nu_p(m)+m\label{A}.\end{eqnarray}

For $I=(i_1,\ldots,i_r)$, let
\begin{eqnarray*}\vt_p(I)&:=&\nu_p\binom{\sum i_j}{i_1,\ldots,i_r}-\nu_p\left(\sum i_j\right)\\
&=&\nu_p\left(\frac 1{i_j}\binom{\sum i_j-1}{i_1,\ldots,i_j-1,\ldots,i_r}\right),\end{eqnarray*}
for any  $j$. Thus
\begin{equation}\label{vt}\vt_p(I)\ge-\min\limits_j\nu_p(i_j).\end{equation}

Ignoring the term $\nu_p(m)$, the expression (\ref{A}) is
\begin{equation}\label{C}\ge\vt_p(I)+\sum i_j(\tfrac1{p-1}j-\nu_p(j+1))-\tfrac{\Delta}{p-1}.\end{equation}
Note that
$$\sum i_j(\tfrac1{p-1}j-\nu_p(j+1))-\tfrac{\Delta}{p-1}=m-\sum i_j\nu_p(j+1)$$
is an integer and is greater than $-1$, and hence is $\ge0$.

By (\ref{vt}), if $\vt_p(I)=-e$ with $e\ge0$, then all $i_j$ are divisible by $p^e$. Thus $\sum i_j(\tfrac1{p-1}j-\nu_p(j+1))$
is positive and divisible by $p^e$. Hence it is $\ge p^e$. Therefore, (\ref{C}) is $\ge-e+p^e-1\ge0$.
We obtain the desired conclusion, that, for each $I$, (\ref{C}), and hence (\ref{A}), is $\ge0$.
\end{proof}

Finally, we address the question of when does equality occur in Proposition \ref{delprop}.
We give a three-part result, but by the third it becomes clear that obtaining additional results is probably more
trouble than it is worth.
\begin{prop} In Proposition \ref{delprop},
\begin{itemize}
\item[a.] the inequality is strict ($\ne$) if $m\equiv0\ (p)$;
\item[b.] equality holds if $\Delta=1$ and $m\not\equiv0,1 \ (p)$;
\item[c.] if $\Delta=2$ and $m\not\equiv0,2\ (p)$, then equality holds
if and only if $3m\not\equiv 5\ (p)$.
\end{itemize}
\end{prop}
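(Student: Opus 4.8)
The plan is to track, for each admissible $I$ (those with $\sum i_jj=(p-1)m+\Delta$), the excess
$$\nu_p(T_I)-\bigl(\nu_p(t)-\nu_p(m)-m\bigr),$$
which by the computation leading to (\ref{C}) is bounded below by $\nu_p(m)+C_I$, where $C_I:=\vt_p(I)+N_I$ and $N_I:=m-\sum i_j\nu_p(j+1)$ is the nonnegative integer appearing just after (\ref{C}); moreover this lower bound is an equality whenever $\sum i_j\le p^{\nu_p(t)}$. Equality in Proposition \ref{delprop} holds precisely when $\nu_p(\sum_IT_I)$ attains $\nu_p(t)-\nu_p(m)-m$, i.e. when $\min_I(\nu_p(m)+C_I)=0$ \emph{and} the $T_I$ of minimal valuation do not sum to something of strictly larger valuation. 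Part (a) then falls out at once: if $m\equiv0\ (p)$ then $\nu_p(m)\ge1$, so every excess is $\ge1$ and the inequality is strict.

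For (b) and (c) we have $\nu_p(m)=0$, so I must locate all $I$ with $C_I=0$, i.e. $\vt_p(I)=-N_I$. First I would dispose of $N_I>0$: by (\ref{vt}), $\vt_p(I)=-N_I<0$ forces $p^{N_I}\mid i_j$ for every $j$, whence $p\mid\sum i_jj=(p-1)m+\Delta\equiv\Delta-m\ (p)$; this requires $m\equiv\Delta\ (p)$, excluded in both (b) ($\Delta=1$) and (c) ($\Delta=2$). So every minimizer has $N_I=0$, equivalently $\sum i_j(\tfrac1{p-1}j-\nu_p(j+1))=\tfrac{\Delta}{p-1}$. Writing $d_j:=\tfrac{j}{p-1}-\nu_p(j+1)$, one checks that $(p-1)d_j$ is a positive integer for every $j\ne p-1$, equal to $1$ only for $j=1$ and to $2$ only for $j=2$ (putative large-$j$ solutions are killed by a congruence mod $p$). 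Hence the off-diagonal part of a minimizer is a composition of $\Delta$ into such blocks, and in each case the number $i_{p-1}$ of diagonal entries is forced to equal $m$.

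For (b), $\Delta=1$ leaves only $I^\ast:=E_1+mE_{p-1}$, for which $\vt_p(I^\ast)=\nu_p\binom{m+1}{1,m}-\nu_p(m+1)=0$; being the unique minimizer, no cancellation is possible and equality holds. For (c), $\Delta=2$ admits $I_1:=2E_1+mE_{p-1}$ and $I_2:=E_2+mE_{p-1}$. Here $\vt_p(I_2)=0$ always, while $\vt_p(I_1)=\nu_p(m+1)$, so $I_2$ is always a minimizer and $I_1$ is one exactly when $m\not\equiv-1\ (p)$. If $m\equiv-1$ the minimizer is unique, equality holds, and indeed $3m\equiv-3\not\equiv5$ since $p\ge5$. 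If $m\not\equiv-1$, both $T_{I_1},T_{I_2}$ have valuation $\nu_p(t)-m$, and I would compute the ratio straight from (\ref{Teq}):
$$\frac{T_{I_1}}{T_{I_2}}=\bigl(t-(m+1)\bigr)\tfrac38\equiv\frac{-3(m+1)}{8}\ (p),$$
using $t\equiv0\ (p)$ and $\nu_p(t-m-1)=\nu_p(m+1)=0$. The two leading terms cancel (forcing strict inequality) iff this ratio is $\equiv-1\ (p)$, i.e. iff $3(m+1)\equiv8$, i.e. $3m\equiv5\ (p)$; otherwise $T_{I_1}+T_{I_2}$ keeps valuation $\nu_p(t)-m$ and equality holds. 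Combining the sub-cases yields: equality holds iff $3m\not\equiv5\ (p)$.

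The main obstacle is the cancellation analysis in (c): one must verify that $I_1$ and $I_2$ are the \emph{only} terms of minimal valuation, so that the leading behaviour of $\sum_IT_I$ is governed by $T_{I_1}+T_{I_2}$ alone, and then extract the correct mod-$p$ reduction of their ratio. The enumeration step, showing $(p-1)d_j\in\{1,2\}$ only for $j\in\{1,2\}$, is where sporadic large-$j$ contributions must be ruled out, and the clean congruence $3m\equiv5$ emerges only after reducing $t\equiv0$ and $t-(m+1)\equiv-(m+1)\ (p)$.
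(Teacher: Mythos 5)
Your argument is correct and follows essentially the same route as the paper: use the lower bound coming from (\ref{A}) to isolate the terms $T_I$ of minimal valuation, note that the minimizer is unique when $\Delta=1$, and when $\Delta=2$ compare $T_{2E_1+mE_{p-1}}$ and $T_{E_2+mE_{p-1}}$ modulo $p$ to extract the condition $3m\equiv 5\ (p)$. The only differences are cosmetic: you test whether the ratio of the two competing terms is $\equiv-1\ (p)$ where the paper computes their sum directly (the arithmetic $\tfrac18(-m-1)+\tfrac13=\tfrac{-3m+5}{24}$ is the same), and you make explicit the enumeration of minimal-valuation $I$ that the paper leaves implicit.
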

\begin{proof} We begin as in the proof of \ref{delprop}, and note that, using (\ref{cor2}), (\ref{A}) is
\begin{equation}\ge\nu_p(m)-\tfrac{\Delta}{p-1}+\sum i_j(\tfrac1{p-1}j-\nu_p(j+1))-\nu_p((p-1)m+\Delta)\label{B}.
\end{equation}

(a) If $\nu_p(m)>0$, then $\nu_p((p-1)m+\Delta)=0$ and so (\ref{B}) is greater than 0.

In (b) and (c), we exclude consideration of the case where $m\equiv\Delta\ (p)$ because then
$\nu_p((p-1)m+\Delta)>0$ causes complications.

(b) If $\Delta=1$ and $m\not\equiv0,1\ (p)$, then for $I=E_1+mE_{p-1}$, (\ref{A}) equals
$$\nu_p(m+1)-\nu_p(m+1)-m+\nu_p(m)+m=0,$$
while for other $I$, (\ref{B}) is
$$0-\tfrac1{p-1}+\sum i_j(\tfrac1{p-1}j-\nu_p(j+1))>0.$$

(c) Assume $\Delta=2$ and $m\not\equiv0,2\ (p)$. Then
\begin{eqnarray}&&T_{2E_1+mE_{p-1}}+T_{E_2+mE_{p-1}}\nonumber\\
&=&\frac{t(t-1)\cdots(t-m-1)}{2!m!}\frac1{4p^m}+\frac{t(t-1)\cdots(t-m)}{m!}\frac1{3p^m}\nonumber\\
&=&(-1)^m\tfrac t{p^m}\bigl(\tfrac18(-m-1+A)+\tfrac13(1+B)\bigr)\nonumber\\
&=&(-1)^m\tfrac t{24p^m}(-3m+5+(3A+8B)).\label{D}\end{eqnarray}
Here $A$ and $B$ are rational numbers which are divisible by $p$. This is true because $\nu_p(t)>\nu_p(i)$
for all $i\le m$.  Since $p>3$, (\ref{D}) has $p$-exponent $\ge\nu_p(t)-m$
with equality if and only if $3m-5\not\equiv0\ (p)$. Using (\ref{B}), the other terms $T_I$ satisfy
$$\nu_p(T_I)-(\nu_p(t)-m)\ge\sum i_j(\tfrac1{p-1}j-\nu_p(j+1))-\tfrac2{p-1}>0.$$
\end{proof}

\def\line{\rule{.6in}{.6pt}}

\end{document}